\definecolor{modra3}{rgb}{.1,.0,.4}
\theoremstyle{plain}
\newtheorem{theorem}{Theorem}
\newtheorem{lemma}[theorem]{Lemma}
\newtheorem{observation}[theorem]{Observation}
\newtheorem{corollary}[theorem]{Corollary}
\newtheorem{claim}[theorem]{Claim}
\newtheorem{conjecture}[theorem]{Conjecture}
\newtheorem{problem}[theorem]{Problem}
\begin{document}

\title{Counterexample to an extension of the Hanani--Tutte theorem on the surface of genus $4$}

\author{
Radoslav Fulek\thanks{IST, Klosterneuburg, Austria;
\texttt{radoslav.fulek@gmail.com}. Supported by the People Programme (Marie Curie Actions) of the European Union's Seventh Framework Programme (FP7/2007-2013) under REA grant agreement no [291734] and by Austrian Science Fund (FWF): M2281-N35.}
  \and
Jan Kyn\v{c}l\thanks{Department of Applied Mathematics and Institute for Theoretical Computer Science, 
Charles University, Faculty of Mathematics and Physics, 
Malostransk\'e n\'am.~25, 118 00~ Praha 1, Czech Republic;
\texttt{kyncl@kam.mff.cuni.cz}. Supported by project 
16-01602Y of the Czech Science Foundation (GA\v{C}R) and by Charles University project 
UNCE/SCI/004.}
} %end of author

\date{}

\maketitle

%==============================================================================================

\begin{abstract}
We find a graph of genus $5$ and its drawing on the orientable surface of genus $4$ with every pair of independent edges crossing an even number of times. This shows that the strong Hanani--Tutte theorem cannot be extended to the orientable surface of genus $4$. As a base step in the construction we use a counterexample to an extension of the unified Hanani--Tutte theorem on the torus. 
\end{abstract}

%==============================================================================================
\section{Introduction}
\label{section_intro}

The Hanani--Tutte theorem~\cite{Ha34_uber,Tutte70_toward} is a classical result that provides an algebraic characterization of planarity with interesting theoretical and algorithmic consequences, such as a simple polynomial algorithm for planarity testing~\cite{Sch13_hananitutte}. The theorem has several variants, the strong and the weak variant are the two most well-known. The notion ``the Hanani--Tutte theorem'' refers to the strong variant.

\begin{theorem}[The (strong) Hanani--Tutte theorem{~\cite{Ha34_uber,Tutte70_toward}}]
\label{theorem_strong}
A graph is planar if it can be drawn in the plane so that no pair of independent edges crosses an odd number of times.
\end{theorem}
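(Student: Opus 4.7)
The natural strategy combines Kuratowski's theorem with a direct parity argument on the two forbidden minors. Suppose $G$ admits a drawing $D$ in the plane in which every pair of independent edges crosses an even number of times, and assume for contradiction that $G$ is not planar. By Kuratowski's theorem, $G$ contains a subdivision $H$ of $K_5$ or $K_{3,3}$. Restricting $D$ to $H$ and then suppressing each subdivision vertex --- an operation which does not change the parity of crossings between any pair of branch edges --- yields a drawing of $K_5$ or $K_{3,3}$ in the plane satisfying the same parity condition. Hence it suffices to prove the theorem for these two graphs.

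For $K_{3,3}$ I would fix a Hamiltonian $6$-cycle $C = a_1 b_1 a_2 b_2 a_3 b_3 a_1$ and focus on the three chords $a_1 b_2$, $a_2 b_3$, $a_3 b_1$; each chord is independent of exactly two edges of $C$ and of the other two chords. The parity hypothesis applied to these pairs, combined with the Jordan curve theorem (any arc joining two points in the same face of a simple closed curve in the plane crosses the curve an even number of times), should yield a system of parity relations that is inconsistent. An analogous argument, using a Hamiltonian $5$-cycle of $K_5$ together with its five chords, handles the $K_5$ case.

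The main obstacle is the bookkeeping of crossings between edges that share a vertex, which the hypothesis does not directly constrain: one has to combine the independent-pair parities with the planar topology so that only crossings between genuinely independent edges enter the final contradiction. An alternative route, which bypasses Kuratowski but is technically more delicate, is the redrawing approach: show first that any independently even drawing in the plane can be converted into a drawing in which \emph{every} pair of edges crosses an even number of times, and then deduce planarity from such an ``even'' drawing by extracting a combinatorial embedding (for example by reading off a consistent rotation system). The plane-specific step in this alternative is precisely the one that can fail on higher-genus surfaces, which is the subject of the rest of the paper.
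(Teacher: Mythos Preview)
The paper does not prove Theorem~\ref{theorem_strong}; it is quoted as a classical result with references to Hanani and Tutte, and serves only as background for the counterexamples on higher-genus surfaces. There is therefore no proof in the paper against which to compare your proposal.

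That said, your plan is the standard one and is essentially correct: reduce via Kuratowski to $K_5$ and $K_{3,3}$, note that restricting the drawing to a subdivision and then suppressing degree-$2$ vertices preserves the parity of crossings between independent branches (because every constituent edge pair from two vertex-disjoint branches is already independent in $G$), and then handle the two base cases. The cleanest way to finish --- and to bypass the bookkeeping you flag --- is Kleitman's invariance argument rather than the Hamiltonian-cycle/Jordan-curve computation: show that the sum modulo $2$ of the crossing numbers over all pairs of independent edges is unchanged under any continuous deformation of the drawing (equivalently, under any local move passing an edge through a vertex or through another crossing), and then evaluate it in one concrete drawing of $K_5$ and of $K_{3,3}$, where it equals~$1$. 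Your alternative ``redrawing'' route is precisely the approach of Pelsmajer, Schaefer and {\v S}tefankovi{\v c}~\cite{PSS06_removing}, which the paper also cites; it works but is, as you say, more delicate.
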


\begin{theorem}[The weak Hanani--Tutte theorem{~\cite{CN00_thrackles,PT00_which,PSS06_removing}}]
\label{theorem_weak}
If a graph $G$ has a drawing $\mathcal{D}$ in the plane where every pair of edges crosses an even number of times, then $G$ has a crossing-free drawing in the plane that preserves the cyclic order of edges at each vertex of~$\mathcal{D}$.
\end{theorem}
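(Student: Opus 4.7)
The plan is to argue by induction on the number of edges of $G$. The base case $G$ has no edges is trivial. For the inductive step, fix any edge $e=uv$, delete it from $\mathcal{D}$ to obtain a drawing $\mathcal{D}_0$ of $G-e$ in which every pair of edges still crosses evenly, and apply the inductive hypothesis to obtain a crossing-free drawing $\mathcal{D}_1$ of $G-e$ in the plane with the same rotation at every vertex as $\mathcal{D}_0$.

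It then suffices to reinsert $e$ into $\mathcal{D}_1$ as a crossing-free simple arc that slots into the prescribed positions in the rotations at $u$ and $v$ coming from $\mathcal{D}$. The position in the rotation at $u$ uniquely identifies a face $F_u$ of $\mathcal{D}_1$ incident to $u$ (the face between the two rotation-neighbors of the stub of $e$), and similarly a face $F_v$ at $v$. Once we know $F_u=F_v$, any simple arc from $u$ to $v$ inside this common face finishes the construction and closes the induction.

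To establish $F_u=F_v$, I plan to use a $\mathbb{Z}_2$-intersection argument. For every cycle $C$ of $G-e$, the hypothesis that $e$ crosses each edge of $C$ evenly in $\mathcal{D}$ yields that the mod-$2$ intersection number of $e$ with $C$ is zero. Since the planar faces of $\mathcal{D}_1$ are $\mathbb{Z}_2$-dual to the cycle space of $G-e$, this forces $u$ and $v$ to lie on the boundary of some common face of $\mathcal{D}_1$. A finer local parity check at each endpoint, reading off on which side of the stub of $e$ in $\mathcal{D}$ the edges incident to $u$ (respectively $v$) fall, then pins down this common face as both $F_u$ and $F_v$.

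The main obstacle I anticipate is this last identification: showing not just that $u$ and $v$ share \emph{some} common face but that the specific faces chosen by the two rotations agree. The cleanest way forward is probably to track an explicit $u$--$v$ path shadowing $e$ through the combinatorial structure of $\mathcal{D}$, verifying that the even-crossing parity makes the "side" relative to incident edges transport consistently from $u$ to $v$. Once this local-global bookkeeping is in place, the remainder of the argument is either routine induction or a standard application of $\mathbb{Z}_2$-duality between cycles and faces.
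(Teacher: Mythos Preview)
The paper does not prove Theorem~\ref{theorem_weak}; it is quoted from the literature, so there is no in-paper argument to compare against. I assess your proposal on its own.

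Your inductive scheme rests entirely on the claim $F_u = F_v$, and the $\mathbb{Z}_2$-intersection argument you sketch does not establish it. The parity statement ``$e$ crosses every cycle $C$ of $G-e$ evenly'' is a fact about the drawing $\mathcal{D}$; the faces $F_u,F_v$ are objects of the \emph{different} drawing $\mathcal{D}_1$. What the even-crossing hypothesis buys you is that the two corners lie on the same side (mod-$2$ winding number) of every cycle \emph{as that cycle is drawn in $\mathcal{D}_0$}. To deduce $F_u=F_v$ you would need the same conclusion for cycles \emph{as drawn in $\mathcal{D}_1$}. Your induction hypothesis only says that $\mathcal{D}_0$ and $\mathcal{D}_1$ share a rotation system; it gives no direct control relating winding parities computed in the two pictures. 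It may well be true that for an even drawing these parities always match those of the corresponding embedding, but proving that is not bookkeeping --- it is the weak Hanani--Tutte statement for $G-e$ in a form strictly stronger than what your inductive hypothesis records (one would need, e.g., that $\mathcal{D}_0$ is ambient-isotopic to $\mathcal{D}_1$, not merely combinatorially equivalent to it). The proofs in the cited references sidestep this by never decoupling the parity data from the drawing under construction: Pelsmajer--Schaefer--\v{S}tefankovi\v{c}, for instance, make one even edge at a time crossing-free by redrawing the rest of the graph across it inside the \emph{same} picture, and iterate.
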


The weak variant earned its name because of its stronger assumptions; however, it does not directly follow from the strong variant since its conclusion is stronger than just planarity of $G$.
For sub-cubic graphs, the weak variant implies the strong variant, since in this case pairs of adjacent edges crossing oddly can be dealt with by a local redrawing in a small neighborhood of each vertex. See the survey by Schaefer~\cite{Sch13_hananitutte} for a deeper historical overview and other variants of the Hanani--Tutte theorem.

Recently a common generalization of both the strong and the weak variant has been discovered. 

\begin{theorem}[Unified Hanani--Tutte theorem{~\cite{FKP16_unified,PSS06_removing}}]
\label{theorem_stronger}
Let $G$ be a graph and let $W$ be a subset of vertices of $G$. Let $\mathcal{D}$ be a drawing of $G$ where every pair of edges that are independent or have a common endpoint in $W$ cross an even number of times. Then $G$ has a plane drawing where cyclic orders of edges at vertices from $W$ are the same as in $\mathcal{D}$.
\end{theorem}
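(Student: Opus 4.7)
My approach is to reduce Theorem~\ref{theorem_stronger} to the weak Hanani--Tutte theorem (Theorem~\ref{theorem_weak}) via a cleanup of the drawing $\mathcal{D}$ at vertices outside $W$. Concretely, I would modify $\mathcal{D}$ into a drawing $\mathcal{D}'$ in which every pair of edges crosses evenly and the rotation at each $v \in W$ is unchanged. Applying Theorem~\ref{theorem_weak} to $\mathcal{D}'$ then yields a plane drawing whose rotation system agrees with that of $\mathcal{D}'$, and hence with $\mathcal{D}$ at every vertex of $W$.

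The main tool is the \emph{swap move}: at a vertex $v$, exchange the positions of two edges consecutive in the rotation, within a small disk around $v$. This move flips the parity of crossings between those two edges and alters the rotation at $v$ by an adjacent transposition, while leaving every other rotation and every other crossing parity intact. I would restrict swap moves to vertices $v \notin W$, so rotations at $W$ are preserved throughout. At each non-$W$ vertex $v$, pairs of incident edges may cross oddly in $\mathcal{D}$, since the hypothesis of Theorem~\ref{theorem_stronger} only controls independent pairs and pairs adjacent at $W$-vertices. Using an appropriate sequence of swap moves, I would render every such odd pair even: bring two offending edges to adjacent positions in the rotation via swaps, perform a central swap to flip their parity, and then apply compensating swaps to cancel the auxiliary flips. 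After this cleanup at every $v \notin W$, all pairs of edges in the resulting drawing $\mathcal{D}'$ cross evenly, and Theorem~\ref{theorem_weak} completes the argument.

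\textbf{Main obstacle.} The technical heart of the proof is executing the cleanup correctly. Swap moves couple the rotation and the parity vector non-trivially, so naive reversals of auxiliary swaps need not cancel: they act on a rotation already modified by the central swap. Establishing that one can always realize the all-even parity pattern at a non-$W$ vertex therefore requires a careful combinatorial argument, such as an induction on the cyclic distance between the two edges to be made even, or an explicit length-bounded sequence of moves whose net effect is to flip a single prescribed parity while preserving all other parities. Once this local swap lemma is in place, the remaining steps---verifying that the cleanup leaves independent-pair parities and $W$-rotations untouched, and invoking Theorem~\ref{theorem_weak}---are essentially bookkeeping.
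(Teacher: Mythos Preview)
Your reduction has a genuine gap at precisely the point you flag as the main obstacle, and the obstacle cannot be overcome: swap moves at a vertex $v$ do not generate all parity vectors on the pairs of edges at $v$. The reason is an invariant. A single swap of consecutive edges $e_a,e_b$ flips the parity of the pair $\{e_a,e_b\}$ and simultaneously reverses the cyclic orientation of every triple $\{e_a,e_b,e_c\}$, while leaving every other pair parity and every other triple orientation untouched. Hence, for any sequence of swaps at $v$ and any triple $T$ of edges at $v$, the mod~$2$ sum of the three parity changes on the pairs in $T$ equals $1$ if and only if the cyclic orientation of $T$ has changed between the initial and the final rotation. In particular, once $\deg(v)\ge 4$ your hoped-for ``flip exactly one prescribed pair'' move is impossible in general: starting from the rotation $(e_1,e_2,e_3,e_4)$, flipping only $\{e_1,e_3\}$ would require a final rotation that reverses exactly the triples $\{1,2,3\}$ and $\{1,3,4\}$ but neither $\{1,2,4\}$ nor $\{2,3,4\}$, and no cyclic order on four points does this.

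This obstruction is not vacuous. Take $G=K_{1,4}$ with $W=\emptyset$, and draw it with rotation $(e_1,e_2,e_3,e_4)$ at the centre so that $e_3$ makes a long detour meeting $e_1$ once far from the centre while all other pairs are disjoint; the hypotheses of the theorem hold trivially, yet by the invariant above no local redrawing at the centre (equivalently, no sequence of swaps there) can make this drawing even. So your cleanup step already fails on this example, and the reduction to Theorem~\ref{theorem_weak} does not go through. For comparison, the paper does not prove Theorem~\ref{theorem_stronger} itself but cites \cite{PSS06_removing} and \cite{FKP16_unified}; neither argument attempts to make the whole drawing even first. The proof in \cite{FKP16_unified}, which the paper singles out, proceeds by a case distinction on the connectivity of $G$ and invokes the weak theorem only as a base case rather than as the final step after a global cleanup.
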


The strong Hanani--Tutte theorem is obtained by setting $W=\emptyset$, the weak variant is obtained by setting $W=V(G)$.

Theorem~\ref{theorem_stronger} directly follows from the proof of the Hanani--Tutte theorem by Pelsmajer, Schaefer and {\v{S}}tefankovi{\v{c}}~\cite{PSS06_removing}. See~\cite{FKP16_unified} for a slightly simpler proof, which is based on case distinction of the connectivity of $G$ and uses the weak Hanani--Tutte theorem as a base case. 

Cairns and Nikolayevsky~\cite{CN00_thrackles} extended the weak Hanani--Tutte theorem to an arbitrary orientable surface.
Pelsmajer, Schaefer and {\v{S}}tefankovi{\v{c}}~\cite{PSS09_surfaces} extended it further to arbitrary nonorientable surface. The \emph{embedding scheme} of a drawing $\mathcal{D}$ on a surface $S$ consists of a cyclic order of edges at each vertex and a signature $+1$ or $-1$ assigned to every edge, representing the parity of the number of crosscaps the edge is passing through.

\begin{theorem}[The weak Hanani--Tutte theorem on surfaces{~\cite[Lemma 3]{CN00_thrackles}, \cite[Theorem 3.2]{PSS09_surfaces}}]
\label{theorem_weaksurface}
If a graph $G$ has a drawing $\mathcal{D}$ on a surface $S$ such that every pair of edges crosses an even number of times, then $G$ has an embedding on $S$ that preserves the embedding scheme of $\mathcal{D}$. 
\end{theorem}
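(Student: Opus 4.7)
The approach I would take is an induction on $|E(G)|$ driven by a \emph{redrawing lemma}. The lemma states: if $\mathcal{D}$ is a drawing of $G$ on $S$ and $e \in E(G)$ crosses every other edge an even number of times, then there is a drawing $\mathcal{D}'$ of $G$ on $S$ with the same embedding scheme in which $e$ is crossing-free and the parity of crossings of every other pair of edges is unchanged. Granting this lemma, the theorem follows immediately: pick any edge $e$ (which satisfies the hypothesis of the lemma by the assumption on $\mathcal{D}$), apply the lemma, cut $S$ open along a thin strip around the now crossing-free $e$ to obtain a surface $S'$ carrying a drawing of $G-e$ that still satisfies the even-crossing hypothesis, and apply induction to embed $G-e$ on $S'$ preserving the induced embedding scheme. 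Gluing $S'$ back along $e$ then yields the required embedding of $G$ on $S$.

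For the redrawing lemma itself, I would work in the complement $U = S \setminus (V(G) \cup \bigcup_{f \ne e} f)$. Up to homotopy rel.\ endpoints in $U$, the curve of $e$ represents a class $[e]$ in a $\mathbb{Z}/2$-homology group of $(\overline{U},\partial\overline{U})$, and its mod-$2$ intersection with the class corresponding to each other edge $f$ is exactly the parity of crossings of $e$ with $f$. The even-crossing hypothesis therefore says that $[e]$ lies in the kernel of the intersection pairing, which on a surface forces $[e] = 0$. Consequently $e$ is homotopic rel.\ endpoints in $U$ to an embedded arc disjoint from every other edge, and this arc becomes the new drawing of $e$. Such a homotopy changes neither the rotation at the endpoints of $e$ (provided it is performed in the complement of small disk neighborhoods of the vertices) nor the signature of $e$ (the parity of crosscaps traversed), so the whole embedding scheme is preserved.

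The main obstacle I anticipate is the topological content of the redrawing lemma, especially on a nonorientable surface. One must check that the even-crossing hypothesis really forces $[e]=0$ in the appropriate quotient (rather than merely in a subquotient), and that the homotopy used to replace $e$ by an embedded arc preserves the signature; this requires carefully tracking one-sided curves and crosscap-passages. A secondary issue is cleaning up self-crossings of edges and crossings of $e$ with its own adjacent edges, since the weak hypothesis only controls pairs of distinct edges; these can be handled by a standard preliminary local redrawing around each vertex and along each edge, after which the global lemma above can be invoked unimpeded.
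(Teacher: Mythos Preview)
The paper does not give its own proof of this theorem; it is quoted from~\cite{CN00_thrackles} and~\cite{PSS09_surfaces} as background and is never re-proved. So there is nothing in the paper to compare your argument against. For what it is worth, your high-level strategy---a redrawing lemma that frees one even edge at a time---does match the approach of Pelsmajer, Schaefer and {\v{S}}tefankovi{\v{c}} in~\cite{PSS09_surfaces}, although their redrawing lemma is proved by an explicit sequence of local ``detour'' moves rather than by the homological argument you sketch.

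That said, your inductive step has a genuine gap. After $e$ has been made crossing-free you propose to cut $S$ along a strip around $e$, obtaining a surface $S'$, apply the inductive hypothesis to $G-e$ on $S'$, and glue back. But $S'$ is a surface \emph{with boundary}, while the theorem (and hence your inductive hypothesis) is stated for closed surfaces; you would have to strengthen the statement to cover bordered surfaces and check that the embedding scheme along $\partial S'$ is respected so that $e$ can be re-inserted after gluing. If instead you apply induction directly to $G-e$ on $S$, the resulting embedding of $G-e$ has no reason to leave room for $e$: the face structure may be completely different from that of $\mathcal{D}'$, since only the rotation system and signatures are preserved, not the isotopy class of the drawing. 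The usual fix is to \emph{contract} the crossing-free edge $e$ rather than cut along it, apply induction to $G/e$ on $S$, and then split the contracted vertex; alternatively one proves the stronger redrawing lemma that frees \emph{all} even edges simultaneously, so that no induction on $|E(G)|$ is needed. Your homological sketch also needs tightening: the curves you want to pair $[e]$ against are not the other edges $f$ themselves (those lie in the complement of $U$, not in $U$) but transverse arcs dual to them, and you must argue that these duals generate the relevant homology so that vanishing of all pairings really forces $[e]=0$.
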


Pelsmajer, Schaefer and Stasi~\cite{PSS09_pp} extended the strong Hanani--Tutte theorem to the projective plane, using the list of forbidden minors. Colin de Verdi{\`e}re et al.~\cite{CKPPT16_direct} recently provided an alternative proof, which does not rely on the list of forbidden minors.

\begin{theorem}[The (strong) Hanani--Tutte theorem on the projective plane{~\cite{CKPPT16_direct,PSS09_pp}}]
\label{theorem_pp_strong}
If a graph $G$ has a drawing on the projective plane such that every pair of independent edges crosses an even number of times, then $G$ has an embedding on the projective plane.
\end{theorem}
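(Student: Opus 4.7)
The plan is to reduce an independently-even drawing on the projective plane to one in which all pairs of edges cross evenly, so that the weak variant (Theorem~\ref{theorem_weaksurface}) yields an embedding on the projective plane preserving the relevant embedding scheme. This mirrors the classical plane proof using the weak Hanani--Tutte theorem, adapted to accommodate a single crosscap.

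First, I would reduce to the case where $G$ is $2$-connected, since blocks of $G$ can be handled independently and at most one block can be non-planar. Next, following the Pelsmajer--Schaefer--\v{S}tefankovi\v{c} vertex-redrawing procedure used in the plane, I would redraw a small disk neighborhood of each vertex $v$ to arrange that for every edge $e \not\ni v$, either all edges incident to $v$ cross $e$ an even number of times or all of them cross $e$ an odd number of times. A small-disk redrawing is purely local and does not interact with the crosscap, so this step preserves the hypothesis that independent pairs of edges cross evenly. The remaining problem is to eliminate odd crossings among pairs of adjacent edges.

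For this, I would induct on the number of edges. If some adjacent pair $e, f$ at a common vertex $v$ crosses oddly, I would delete $e$, apply the inductive hypothesis to $G - e$ to obtain a projective embedding whose rotation system matches $\mathcal{D}$ at the vertices other than the endpoints of $e$, and then attempt to re-insert $e$ along a curve with the signature prescribed by $\mathcal{D}$. If $G-e$ is planar, a separate argument using the plane case (Theorem~\ref{theorem_strong}) and the unified variant (Theorem~\ref{theorem_stronger}) is required to insert $e$, possibly through the crosscap.

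The hardest step is the re-insertion. On the plane, every face of a $2$-connected embedding is a disk, and inserting an edge between two vertices on a common face is trivial; on the projective plane, one must verify that the faces of the embedding of $G - e$ obtained inductively still admit a route connecting the endpoints of $e$ with the correct edge signature and respecting the rotations at those endpoints. I expect the main obstacle to be a parity-obstruction argument showing that if no such route exists in any projective embedding of $G - e$, then some pair of independent edges of $G$ must have crossed oddly in $\mathcal{D}$, yielding a contradiction. This likely requires a case analysis based on the $3$-connectivity of $G$ (to invoke near-uniqueness of projective embeddings, as in Negami's theorem) and a careful accounting of how the signatures along a non-contractible cycle containing $e$ constrain the signature of $e$ itself.
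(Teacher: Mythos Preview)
The paper does not contain a proof of Theorem~\ref{theorem_pp_strong}; it is quoted as a known result from~\cite{PSS09_pp} and~\cite{CKPPT16_direct}. Hence there is no ``paper's own proof'' to compare against. For context: the proof in~\cite{PSS09_pp} proceeds by checking, for each of the $35$ forbidden minors for projective-planar embeddability, that it admits no independently even drawing on the projective plane; the proof in~\cite{CKPPT16_direct} avoids the minor list but is lengthy and structurally different from what you sketch.

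Your outline is not one of these two approaches, and as written it has real gaps rather than merely omitted routine details. The ``small-disk redrawing'' you describe does not do what you claim: a local modification near $v$ cannot change, for a fixed edge $e\not\ni v$, the parity of crossings between $e$ and an individual edge at $v$; it can only permute the rotation at $v$. (The paper itself notes that local redrawing fixes adjacent odd pairs only for subcubic graphs.) More seriously, the inductive re-insertion step is the whole theorem. After embedding $G-e$ on the projective plane you have no control over the rotations at the endpoints of $e$, nor over which face(s) those endpoints lie on, nor over whether the crosscap has already been ``used up'' by $G-e$; your appeal to $3$-connectivity and Negami-type uniqueness does not help, because $G-e$ need not be $3$-connected even if $G$ is, and uniqueness of projective embeddings is far weaker than planar uniqueness. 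The parity-obstruction argument you anticipate (``if no route exists then some independent pair crossed oddly'') is exactly the hard content of~\cite{CKPPT16_direct}, and it does not fall out of a short case analysis. In short, what you have is a plausible aspiration rather than a proof strategy; to turn it into one you would essentially have to redo one of the cited papers.
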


Whether the strong Hanani--Tutte theorem can be extended to some other surface than the plane or the projective plane has been an open problem. Schaefer and {\v{S}}tefankovi{\v{c}}~\cite{SS13_block} showed that a minimal counterexample to the strong Hanani--Tutte theorem on any surface must be $2$-connected.

%-------------------------------------------------------------------------------------------
\subsection{Our results}

Our main result is a counterexample to the extension of the strong Hanani--Tutte theorem on the orientable surface of genus $4$.

\begin{theorem}
\label{theorem_counter_genus4}
There is a graph of genus $5$ that has a drawing on the orientable surface of genus $4$ with every pair of independent edges crossing an even number of times.
\end{theorem}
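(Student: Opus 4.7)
The approach is to amplify a counterexample to an extension of the unified Hanani--Tutte theorem (Theorem~\ref{theorem_stronger}) on the torus into a strong Hanani--Tutte counterexample on the orientable surface of genus~$4$. In the preceding section we construct a triple $(H, W, \mathcal{D}_0)$, where $H$ is a graph of genus~$2$, $W \subseteq V(H)$ has size~$3$, and $\mathcal{D}_0$ is a drawing of $H$ on the torus in which every two edges that are independent or share an endpoint in $W$ cross an even number of times. Since $H$ has genus~$2$, it does not embed on the torus at all, and so this triple is in particular a counterexample to the extension of the unified Hanani--Tutte theorem to the torus.

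For each $w \in W$ we attach a rotation-forcing gadget $G_w$ of genus~$1$, drawn on an additional handle glued to a small disk neighborhood of $w$, after first isotoping $\mathcal{D}_0$ out of that disk. The gadget subdivides each $H$-edge incident to $w$ and ties the resulting subdivision vertices to a rigid non-planar piece designed so that in any embedding of the augmented graph $H^+$ the cyclic order of the $H$-edges at $w$ must agree with that of $\mathcal{D}_0$, \emph{including orientation} on the oriented surface. The augmented drawing then sits on the orientable surface of genus $1+3=4$, and an embedding of $H^+$ on the genus-$4$ surface would, after collapsing each handle carrying a gadget, produce a torus embedding of $H$ with the rotations at $W$ forced by $\mathcal{D}_0$, which is impossible; hence $H^+$ has genus~$5$. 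One then verifies that every pair of independent edges of $H^+$ crosses evenly: pairs internal to $\mathcal{D}_0$ are handled by its unified parity (pairs sharing an endpoint in $V(H)\setminus W$ remain adjacent after subdivision and thus need not obey any parity constraint); pairs involving a gadget edge cross every edge of $\mathcal{D}_0$ either $0$ or $2$ times because the gadget lives in the cleared disk neighborhood; and pairs of gadget edges are arranged evenly by the choice of drawing of $G_w$ on the new handle.

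The main obstacle is the design of the gadget $G_w$. It must force the cyclic order at $w$ \emph{including orientation} (not merely up to reflection, which would still allow a reversed torus embedding of $H$ to respect the gadgets), add exactly one handle to both the drawing genus and the graph genus, and fit into a small disk neighborhood of~$w$ without upsetting the parity conditions. Orientation-rigidity requires $G_w$ to be chiral on the added handle, typically realized as a non-planar graph with a unique embedding up to orientation-preserving homeomorphism on its genus surface, whose connections to the subdivision vertices pin down not only the unordered cyclic arrangement of $H$-edges at $w$ but also its sense of rotation.
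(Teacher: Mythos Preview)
Your plan does not match the paper's construction, and it has a genuine gap at the step where you pass from an embedding of $H^+$ on $M_4$ back to a torus embedding of $H$.

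First, the base object is not what you describe. In the paper the ``preceding section'' produces $G=K_{3,4}$ with $|W|=4$ (not $3$), and $K_{3,4}$ has genus~$1$, not~$2$: it \emph{does} embed on the torus. The obstruction in Theorem~\ref{theorem_counter_unified_torus} is purely that no torus embedding realizes the prescribed rotation $(1,2,3)$ simultaneously at all four vertices of $W$. A graph of genus~$2$ with an independently even drawing on the torus would already be a strong Hanani--Tutte counterexample on the torus, which is not what is available.

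Second, and more seriously, the sentence ``after collapsing each handle carrying a gadget, produce a torus embedding of $H$'' is the heart of the matter and is not justified. In an \emph{arbitrary} embedding of $H^+$ on $M_4$ you have no control over which part of the graph uses which handle; the gadgets need not sit on disjoint handles, and the copy of $H$ need not lie on a torus minus discs. There is no general ``collapse'' operation that strips three units of genus and returns an embedding of the base graph on the torus with the gadget-forced rotations intact. Designing $G_w$ to be rigid on its own genus-$1$ surface does not help here: rigidity tells you what $G_w$ looks like in isolation, not how it interacts with the rest of the embedding on $M_4$.

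The paper circumvents exactly this difficulty by a different mechanism. Instead of abstract rotation-forcing gadgets, it replaces the four vertices of $W$ by four holes and glues in a single large $N\times N$ grid (so the constructed graph $K$ is a grid with three extra degree-$4$ vertices). The grid is the gadget. Its role is not local: by the grid-embedding lemma of Geelen--Richter--Salazar, \emph{any} embedding of $K$ on a bounded-genus surface contains a large subgrid embedded in a topological disc. Inside that disc one can then route vertex-disjoint paths to build an embedding of $K_{4,5}$ on the same surface in which all four vertices of one side have the same rotation $(1,2,3,4,5)$. An Euler-characteristic count (every facial walk has length divisible by $10$) forces genus at least~$5$. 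The upper bound comes from an explicit genus-$2$ embedding of $K_{3,4}$ with equal rotations at $W$, glued to the planar grid. Thus the paper never ``collapses handles''; it uses the grid to manufacture a planar region in any putative embedding and rebuilds the rotation obstruction there.
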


Theorem~\ref{theorem_counter_genus4} disproves a conjecture of Schaefer and {\v{S}}tefankovi{\v{c}}~\cite[Conjecture 1]{SS13_block} that the $\mathbb{Z}_2$-genus of a graph is equal to its genus; but the question whether the Euler $\mathbb{Z}_2$-genus of a graph is equal to its Euler genus remains open.
  
As a base step in the construction, we use a counterexample to the extension of the unified Hanani--Tutte theorem on the torus. 

\begin{theorem}
\label{theorem_counter_unified_torus}
There is a graph $G$ with the following two properties.
\begin{enumerate}
\item[{\rm 1)}] The graph $G$ has a drawing $\mathcal{D}$ on the torus with every pair of independent edges crossing an even number of times, and with a set $W$ of four vertices such that every pair of edges with a common endpoint in $W$ crosses an even number of times.
\item[{\rm 2)}] There is no embedding of $G$ on the torus with the same cyclic orders of edges at the vertices of $W$ as in $\mathcal{D}$.
\end{enumerate}
\end{theorem}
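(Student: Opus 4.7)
The plan is to construct the graph $G$ and the drawing $\mathcal{D}$ explicitly, and then to prove non-existence of a compatible torus embedding by a combinatorial/topological argument on rotation systems.

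First I would choose a small graph $G$ together with a four-vertex set $W=\{w_1,w_2,w_3,w_4\}$ so that the role of $W$ is to encode a "twisted" gluing of two subgraphs: roughly, $G$ should admit a decomposition $G = G_1 \cup G_2$ where $G_1$ and $G_2$ share exactly the vertices of $W$, each $G_i$ embeds on a disc (or on the torus with $W$ on a single face), but the combinatorial way the two are glued along $W$ forces the joint embedding to have genus at least two. A convenient candidate is to take each $G_i$ to be a subdivided $K_4$-like gadget with the vertices of $W$ on its outer face, and to prescribe at every $w_j$ a rotation that interleaves the edges of $G_1$ with the edges of $G_2$. This interleaving at all four vertices of $W$ simultaneously is the ingredient that cannot be realized by a torus embedding.

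Second, I would build the drawing $\mathcal{D}$ by the following strategy. Draw $G_1$ as a plane embedding inside a small disc $D_1$, and draw $G_2$ as a plane embedding inside a disjoint small disc $D_2$, and then connect the copies of each $w_j$ in $D_1$ and $D_2$ by four ``umbilical'' curves that run through the two handles of the torus. One handle is used to route the umbilicals so as to realize the desired interleaved rotation at each $w_j$; crossings are paid for along the umbilicals. The even-crossing conditions from item (1) are then achieved by controlling parities: each pair of umbilicals can be routed so that they cross an even number of times, and each umbilical can be routed so that it crosses each edge of $G_1$ and $G_2$ an even number of times. The remaining pairs of edges are either contained in a single disc (hence disjoint) or share an endpoint outside $W$, so no parity condition is imposed.

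Third, and hardest, I would prove that no torus embedding of $G$ induces the prescribed rotations at $W$. The main obstacle is that the space of torus embeddings is large, so a brute-force case analysis is impractical; one must find a combinatorial invariant tied to the rotations at $W$. I would use Euler's formula combined with genus additivity along the separating $4$-cut at $W$: any embedding of $G$ on the torus restricts to embeddings of $G_1$ and $G_2$ on subsurfaces with boundary, whose genera must sum (after a correction accounting for the way $W$ is arranged) to at most $1$. The interleaved rotation forces each $G_i$ to use at least one handle to be drawn consistently with its prescribed rotation at $W$, yielding total genus at least $2$, a contradiction. Making this argument precise — in particular, setting up the correct accounting of handles used by a rotation system on a separating vertex set — is where the real work lies; I would expect to need either the theory of rotation systems and face tracings, or a direct argument via the existence of non-contractible curves separating $G_1$ from $G_2$ in the embedding. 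The result of Theorem~\ref{theorem_stronger} in the plane guarantees that this obstruction is genuinely a topological one and not a parity one, which explains why the even-crossing drawing can exist while the embedding cannot.
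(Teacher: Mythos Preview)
Your proposal is a sketch, not a proof, and the central step has a real gap. In your third paragraph you invoke ``genus additivity along the separating $4$-cut at $W$'' to conclude that each piece $G_i$ must use its own handle. Genus is not additive over vertex cuts of size larger than~$1$ in any simple way; the standard additivity results concern blocks (cut vertices) or amalgamations over an edge, and even these require care. Your statement that ``the interleaved rotation forces each $G_i$ to use at least one handle'' is exactly the heart of the matter and is left unjustified. You yourself write that ``making this argument precise \dots\ is where the real work lies''; as it stands, none of that work has been done. Your construction of $\mathcal{D}$ in the second paragraph is likewise only a strategy: you assert that the umbilicals can be routed with all the required parities while simultaneously realizing the interleaved rotations at every $w_j$, but you neither specify the routing nor check that the two requirements are compatible.

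The paper's proof is far shorter and avoids both difficulties. It takes $G=K_{3,4}$ with parts $U=\{1,2,3\}$ and $W=\{A,B,C,D\}$, exhibits an explicit independently even drawing on the torus in which every vertex of $W$ has the \emph{same} rotation $(1,2,3)$, and verifies condition~(1) by inspection. For condition~(2) the key observation is purely combinatorial: in any orientable embedding where all four vertices of $W$ have identical rotation, no face can be a $4$-cycle, because the two $W$-vertices on such a face would be forced to have opposite rotations. Hence every face has length at least~$6$, and Euler's formula with $v=7$, $e=12$, $2e\ge 6f$ gives $\chi(S)\le\frac{1}{3}(3v-2e)=-1$, so the genus is at least~$2$. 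No cut-decomposition, no additivity, no gadgets: the uniform rotation at $W$ directly kills short faces, and the face-length bound does the rest.
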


In our proof of Theorem~\ref{theorem_counter_unified_torus} the graph $G$ is isomorphic to $K_{3,4}$. The graph in Theorem~\ref{theorem_counter_genus4} will be obtained by attaching three stars $K_{1,4}$ to a sufficiently large grid.

We prove Theorem~\ref{theorem_counter_unified_torus} and Theorem~\ref{theorem_counter_genus4} in Section~\ref{section_proofs}, after establishing some basic notation. In Section~\ref{section_consequences} we show how to extend the results to surfaces of higher genus. In Section~\ref{section_questions} we briefly discuss several related questions and open problems.

%============================================================================================

\section{Notation}

Refer to the monograph by Mohar and Thomassen~\cite{MT01_graphs} for a detailed introduction into surfaces and graph embeddings.
By a {\em surface} we mean a connected compact $2$-dimensional topological manifold. Every surface is either {\em orientable} (has two sides) or {\em nonorientable} (has only one side). Every orientable surface $S$ is obtained from the sphere by attaching $g \ge 0$ \emph{handles}, and this number $g$ is called the {\em genus} of $S$.
Similarly, every nonorientable surface $S$ is obtained from the sphere by attaching $g \ge 0$ \emph{crosscaps}, and this number $g$ is called the {\em (nonorientable) genus} of $S$. The simplest orientable surfaces are the sphere (with genus $0$) and the torus (with genus $1$). The simplest nonorientable surfaces are the projective plane (with genus $1$) and the Klein bottle (with genus $2$). We denote the orientable surface of genus $g$ by $M_g$.

We will also consider \emph{surfaces with holes}: an \emph{orientable surface of genus $g$ with $k$ holes}, denoted by $M_{g,k}$, is obtained from $M_g$ by removing $k$ disjoint open discs whose boundaries are also disjoint. The boundaries of the removed discs thus belong to $M_{g,k}$ and they are called the \emph{boundary components} of $M_{g,k}$.

Let $G=(V,E)$ be a graph with no multiple edges and no loops,
and let $S$ be a surface or a surface with holes. 
A \emph{drawing} of $G$ on $S$ is a representation of $G$ where every vertex is represented by a unique point in $S$ and every
edge $e$ joining vertices $u$ and $v$ is represented by a simple curve in $S$ joining the two points that represent $u$ and $v$. If it leads to no confusion, we do not distinguish between
a vertex or an edge and its representation in the drawing and we use the words ``vertex'' and ``edge'' in both contexts. We require that in a drawing no edge passes through a vertex,
no two edges touch, every edge has only finitely many intersection points with other edges and no three edges cross at the same inner point. In particular, every common point of two edges is either their common endpoint or a crossing.

A drawing of $G$ on $S$ is an \emph{embedding} if no two edges cross. A \emph{face} of an embedding of $G$ on $S$ is a connected component of the topological space obtained from $S$ by removing all the edges and vertices of $G$. A \emph {$2$-cell} embedding is an embedding whose each face is homeomorphic to an open disc. In particular, a graph that has a $2$-cell embedding must be connected, but not necessarily $2$-connected.

The \emph{rotation} of a vertex $v$ in a drawing of $G$ on an orientable surface is the clockwise cyclic order of the edges incident to $v$. We will represent the rotation of $v$ by the cyclic order of the other endpoints of the edges incident to $v$. The {\em rotation system} of a drawing is the set of rotations of all vertices. 

A \emph{facial walk} corresponding to a face $f$ in a $2$-cell embedding of $G$ on an orientable surface is the closed walk $w(f)$ in $G$ with the following properties: the image of $w$ in the embedding forms the boundary of $f$, and whenever $w$ is entering a vertex $v$ along an edge $e$, the next edge on $w(f)$ is the edge that immediately follows $e$ in the rotation of $v$. In particular, while tracing the walk $w(f)$ in the embedding, the face $f$ is always on the left-hand side. 

The {\em Euler characteristic\/} of a surface $S$ of genus $g$, denoted by $\chi(S)$, is defined as $\chi(S)=2-2g$ if $S$ is orientable, and $\chi(S)=2-g$ if $S$ is nonorientable. Equivalently, if $v,e$ and $f$ denote the number of vertices, edges and faces, respectively, of a $2$-cell embedding of a graph on $S$, then $\chi(S)=v-e+f$.

We say that two edges in a graph are \emph{independent} if they do not share a vertex.
An edge in a drawing is {\em even\/} if it crosses every other edge an even number of times.
A vertex $v$ in a drawing is {\em even\/} if all the edges incident to $v$ cross each other an even number of times.
A drawing of a graph is \emph{even} if all its edges are even.
A drawing of a graph is \emph{independently even} if every pair of independent edges in the drawing crosses an even number of times.

The \emph{genus} $g(G)$ of a graph $G$ is the minimum $g$ such that $G$ has an embedding on $M_g$. The \emph{$\mathbb{Z}_2$-genus} of a graph $G$ is the minimum $g$ such that $G$ has an independently even drawing on $M_g$.

%============================================================================================
\section{Counterexamples}
\label{section_proofs}

%---------------------------------------------------------------------------------------------
\subsection{Proof of Theorem~\ref{theorem_counter_unified_torus}}

Let $G=K_{3,4}$. Let $V(G)=U\cup W$ where $U$ and $W$ are the two maximal independent sets, $U=\{1,2,3\}$, and $W=\{A,B,C,D\}$. We claim that the drawing $\mathcal{D}$ in Figure~\ref{obr_1_torus_unified_ABCD} satisfies the theorem. 

\begin{figure}
\begin{center}
\epsfbox{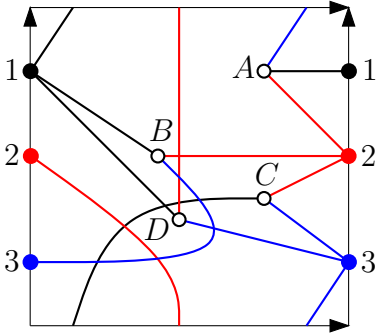}
\end{center}
\caption{An independently even drawing $\mathcal{D}$ of $K_{3,4}$ on the torus.  
The vertices of $W$ are drawn as empty circles, and each of them has rotation $(1,2,3)$.}
\label{obr_1_torus_unified_ABCD}
\end{figure}

Condition 1) is easily verified by inspection of the figure: the pairs $\{C1,D1\}, \{B2,D2\}$ and $\{B3,D3\}$ cross once, but they are all adjacent with a common vertex outside $W$, the pairs $\{C1,B3\}$ and $\{D2,B3\}$ cross twice, and no other pair of edges has a common crossing. To verify condition 2), we use the fact that every vertex of $W$ has the same rotation in $\mathcal{D}$; namely, $(1,2,3)$. Let $\mathcal{E}$ be an embedding of $G$ on an orientable surface $S$ such that the rotation of every vertex from $W$ is $(1,2,3)$. Assume without loss of generality that $S$ has minimum possible genus, which implies that $\mathcal{E}$ is a $2$-cell embedding. Since $G$ is bipartite, every face of $\mathcal{E}$ is bounded by a walk of even length. Moreover, we have the following crucial observation.

\begin{observation}\label{obs_aspon6cykly}
No face of $\mathcal{E}$ is bounded by a walk of length $4$.
\end{observation}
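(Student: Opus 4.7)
The plan is to rule out length-$4$ faces of $\mathcal{E}$ by an elementary $\mathbb{Z}_3$-parity argument using the common rotation of the vertices of $W$. Suppose for contradiction that some face $f$ of $\mathcal{E}$ is bounded by a facial walk $v_0 v_1 v_2 v_3 v_0$ of length $4$. Because $G=K_{3,4}$ is bipartite with parts $U=\{1,2,3\}$ and $W$, consecutive vertices of this walk lie in different parts; after a cyclic shift of the walk we may assume $v_0, v_2 \in U$ and $v_1, v_3 \in W$. Identify $U$ with $\mathbb{Z}_3$ in the natural way.

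Next, I would exploit the assumption that every vertex of $W$ has rotation $(1,2,3)$. By the definition of a facial walk, whenever $w(f)$ enters some $w \in W$ along the edge to a neighbor $u \in U$, the next edge on $w(f)$ is the one immediately following in the rotation of $w$, which leads to the neighbor $u+1 \pmod 3$. Applying this at $v_1$ gives $v_2 \equiv v_0 + 1 \pmod 3$, and applying it at $v_3$ gives $v_0 \equiv v_2 + 1 \pmod 3$. Adding these two congruences yields $0 \equiv 2 \pmod 3$, which is the desired contradiction.

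The one subtlety worth mentioning is that the argument must not tacitly assume the four vertices on the walk are distinct. Fortunately, the two congruences above depend only on the local rotations at the two visits to $W$, so the derivation is uniform whether $v_1=v_3$, or $v_0=v_2$ (in which case the first congruence $v_0 \equiv v_0 + 1$ already suffices), or the walk repeats an edge. Consequently, no case analysis is required and no genuine obstacle is expected: the whole proof boils down to the observation that the rotation $(1,2,3)$ forces each $W$-visit of a facial walk to advance the $U$-label by $+1$, while a closed walk of length four crosses $W$ exactly twice and must return to its start, which is impossible modulo $3$.
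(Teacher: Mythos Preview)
Your proof is correct and follows essentially the same approach as the paper's: both observe that the common rotation $(1,2,3)$ at the vertices of $W$ forces each $W$-visit of a facial walk to increment the $U$-label by $+1 \pmod 3$, so a closed walk with exactly two $W$-visits cannot return to its start. The only minor difference is that the paper first invokes $2$-connectivity of $K_{3,4}$ to conclude the walk is a genuine $4$-cycle before deriving the contradiction, whereas you note (correctly) that this step is unnecessary for the $\mathbb{Z}_3$ argument.
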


\begin{proof}
Suppose that $\mathcal{E}$ has a face bounded by a walk $v_1v_2v_3v_4$. Since $G$ is $2$-connected, the walk forms a $4$-cycle in $G$. By symmetry, we may assume that $v_1,v_3\in U$ and $v_2,v_4\in W$. It follows that in the rotation of $v_2$, the vertex $v_1$ is immediately followed by $v_3$, but in the rotation of $v_4$ the vertex $v_3$ is immediately followed by $v_1$; see Figure~\ref{obr_2_4face}. Thus, the rotations of $v_2$ and $v_4$ cannot be the same, but this is a contradiction with the definition of $\mathcal{E}$.
\end{proof}

\begin{figure}
\begin{center}
\epsfbox{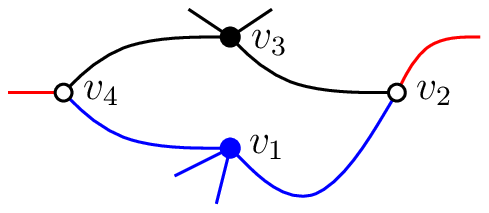}
\end{center}
\caption{A face bounded by a $4$-cycle would force different rotations of two vertices from $W$.}
\label{obr_2_4face}
\end{figure}

It follows that every face of $\mathcal{E}$ is bounded by a walk of length at least $6$. Let $v,e$ and $f$ be the numbers of vertices, edges and faces, respectively, of $\mathcal{E}$. We thus have $2e\ge 6f$, and so we can bound the Euler characteristic of $S$ as follows: 
\[
\chi(S)=v-e+f=\frac{1}{3}(3v-3e+3f)\le\frac{1}{3}(3v-2e)=\frac{1}{3}(21-24)=-1.
\]
This implies that the genus of $S$ is at least $\lceil(2+1)/2\rceil = 2$.

%---------------------------------------------------------------------------------------------
\subsection{Proof of Theorem~\ref{theorem_counter_genus4}}

\begin{figure}
\begin{center}
\epsfbox{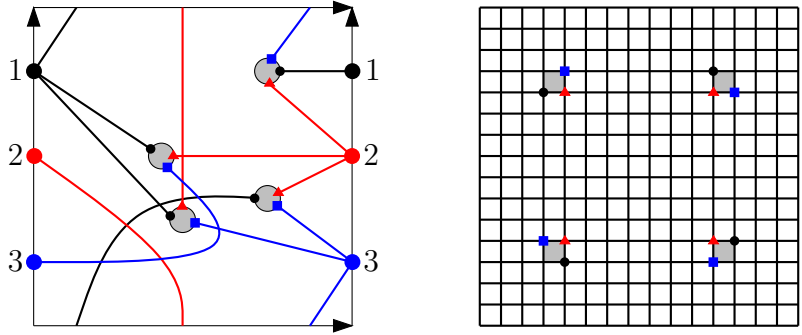}
\end{center}
\caption{Left: an independently even drawing $\mathcal{D}'$ of $3K_{1,4}$ on $M_{1,4}$. Right: a drawing $\mathcal{H}$ of the grid $H$ on $M_{0,4}$. Only three chosen vertices on each boundary component are marked.}
\label{obr_3_split_3K_1_4__grid}
\end{figure}

The proof is structured as follows. First we construct a graph $K$, by attaching $3K_{1,4}$ to a large grid. Then we verify that $K$ satisfies all the conditions of the theorem: it has an independently even drawing on $M_4$, it has no embedding on $M_4$, and it has an embedding on $M_5$.

%- - - - - - - - - - - - - - - - - - - - - - - - - - - - - 
\paragraph{A construction of the graph $K$.}
Let $G=K_{3,4}$, with parts $U$ and $W$, be the graph from the previous subsection and let $\mathcal{D}$ be the drawing of $G$ on the torus in Figure~\ref{obr_1_torus_unified_ABCD}. Cut a small circular hole around each vertex of $W$ in $\mathcal{D}$ and place a new vertex on all twelve intersections of an edge of $\mathcal{D}$ and a boundary of a hole; see Figure~\ref{obr_3_split_3K_1_4__grid}, left. In this way we obtain an independently even drawing $\mathcal{D}'$ of the disjoint union of three copies of $K_{1,4}$ on $M_{1,4}$, the torus with four holes. We consider the three copies of $K_{1,4}$ colored black, red and blue. On each boundary component of $M_{1,4}$, the clockwise order of the three vertices of $\mathcal{D}'$  is consistent with the rotations of the vertices of $W$ in $\mathcal{D}$: a black vertex is followed by a red vertex, which is followed by a blue vertex.

Let $N$ be a sufficiently large integer, and assume for convenience that $N$ is divisible by $8$. 
Let $H$ be the $N\times N$ grid; that is, a graph with vertex set $[N]\times[N]$ and the edge set $\{\{(i,j),(i',j')\}; ((i=i') \wedge (j=j'+1)) \vee ((i=i'+1) \wedge (j=j'))\}$. Let $\mathcal{H}$ be the canonical embedding of $H$ in the plane as a part of the integer grid, with edges drawn along the vertical and horizontal lines. Choose four \emph{special} $4$-cycles in $\mathcal{H}$ that are sufficiently separated from each other and also from the boundary of the grid; for example, the $4$-cycles with bottom left corners at 
%$(\lfloor N/4\rfloor ,\lfloor N/4\rfloor)$, $(\lfloor 3N/4\rfloor,\lfloor N/4\rfloor)$, $(\lfloor N/4\rfloor,\lfloor 3N/4\rfloor)$ and $(\lfloor 3N/4\rfloor,\lfloor 3N/4\rfloor)$. 
$(N/4, N/4)$, $( 3N/4, N/4)$, $( N/4, 3N/4)$ and $( 3N/4, 3N/4)$. 
See Figure~\ref{obr_3_split_3K_1_4__grid}, right. For each of these special $4$-cycles in $\mathcal{H}$, remove its interior from the plane, select three of its vertices, and mark them as black, blue, and red in clockwise order. We can now regard $\mathcal{H}$ as an embedding of $H$ on $M_{0,4}$.

Let $K$ be the graph obtained from $H$ by adding three vertices, labeled by $1$, $2$ and $3$, each of degree $4$, with vertex $1$ joined by an edge to each black vertex in $H$, vertex $2$ joined to each red vertex in $H$, and vertex $3$ joined to each blue vertex in $H$.

\begin{claim}
\label{claim_indep_even_M4}
The graph $K$ has an independently even drawing on $M_4$. 
\end{claim}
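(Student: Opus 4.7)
My approach is to obtain the desired drawing of $K$ on $M_4$ by gluing the drawing $\mathcal{D}'$ on $M_{1,4}$ to the embedding $\mathcal{H}$ on $M_{0,4}$ along the four pairs of boundary circles, identifying each colored vertex of $\mathcal{D}'$ with the equally colored vertex of $\mathcal{H}$ on the corresponding circle. The clockwise cyclic orders of colors on the two sides have been chosen to be reverses of each other (black-red-blue on the $M_{1,4}$ side, black-blue-red on the $M_{0,4}$ side), which is precisely what is needed for the identification to be consistent with orientations on the two pieces and thus to yield an orientable closed surface. Since gluing along circles is additive on Euler characteristic, the glued surface has $\chi = \chi(M_{1,4})+\chi(M_{0,4}) = -4+(-2) = -6$, and together with orientability this identifies it as $M_4$.

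Next I would verify that the glued picture really is a drawing of $K$. The star edges of $\mathcal{D}'$ lie strictly inside the $M_{1,4}$ piece and the grid edges of $\mathcal{H}$ lie strictly inside the $M_{0,4}$ piece, so the two pieces meet only along the boundary circles, at the twelve identified colored vertices. Under the identification, the four stubs emanating from vertex $1$ become the four edges from $1$ to the black grid vertices, and similarly for vertices $2$ and $3$; the remaining edges are those of $H$. Thus the resulting graph is exactly $K$.

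Finally I would check independent evenness. Every pair of edges with at least one grid edge has no crossings, since the grid and star parts are confined to disjoint open subsurfaces and meet only at common endpoints. For two star edges that are independent in $K$, tracing them back to $\mathcal{D}$ shows they arise from two edges of $K_{3,4}$ that either were independent in $K_{3,4}$ or shared a vertex of $W$: if they had shared a vertex of $U=\{1,2,3\}$, they would belong to the same star $K_{1,4}$ and remain adjacent in $K$. By Theorem~\ref{theorem_counter_unified_torus}(1), every such pair crosses evenly in $\mathcal{D}$, and this parity is preserved when small disks at the vertices of $W$ are cut out and the grid is glued in, since the operation only truncates edges close to their $W$-endpoints and introduces no new crossings.

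The step I expect to be the main obstacle is the first one: verifying carefully that the chosen color-matching produces a well-defined identification that yields an orientable surface of the correct genus. The other two steps are essentially bookkeeping that follows by tracing through the construction, with the crucial input being condition 1) of Theorem~\ref{theorem_counter_unified_torus}, which was designed exactly so that the awkward odd-crossing pairs in $\mathcal{D}$ all share a vertex in $U$ and therefore get absorbed into adjacent pairs of the three $K_{1,4}$ stars in $K$.
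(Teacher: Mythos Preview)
Your approach is exactly the paper's: glue $\mathcal{D}'$ on $M_{1,4}$ to $\mathcal{H}$ on $M_{0,4}$ along the four boundary components, matching vertices of the same color. The paper's proof is a single sentence stating this gluing, whereas you have spelled out the orientation compatibility, the Euler-characteristic computation identifying the result as $M_4$, and the case check for independent evenness---all of which are correct and implicitly assumed in the paper.
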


\begin{proof}
Such a drawing is obtained by gluing the drawings $\mathcal{D}'$ and $\mathcal{H}$ along the four boundary components of $M_{1,4}$ and $M_{0,4}$, respectively, in such a way that pairs of vertices of the same color are identified.
\end{proof}

%In the rest of the proof we show that $K$ has no embedding on $M_4$, and also observe that it does have an embedding on $M_5$.
%- - - - - - - - - - - - - - - - - - - - - - - - - - - - - 
\paragraph{Grid embedding lemma.}

We use the following grid embedding lemma by Geelen, Richter and Salazar~\cite{GRS04_grid}, which states that in every embedding of a large grid on a surface of fixed genus, a large portion of the grid is embedded in a planar way. This also follows from earlier statements by Thomassen~\cite[Proposition 3.2]{Tho97_excluded} or Mohar~\cite[Theorem 5.1]{Mo92_width}, and is implicit in the proof by Robertson and Seymour~\cite{RoSe90_VIII_Kuratowski} that each surface has only finitely many forbidden minors.

\begin{lemma}[{\cite[Lemma 4]{GRS04_grid}}]
\label{lemma_grid}
Suppose that $H$ is an $N\times N$ grid embedded on $M_g$, and let $t,k$ be positive integers such that $N\ge t(k+1)$ and $t^2\ge 2g+1$. Then a $k \times k$ subgrid $H'$ of $H$ is embedded in a topological disc in $M_g$ whose boundary is formed by the boundary $4(k-1)$-cycle of $H'$.
\end{lemma}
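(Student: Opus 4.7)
The plan is to combine a tiling argument with a topological pigeonhole using Euler characteristic. First, the hypothesis $N \ge t(k+1)$ lets me select $t^2$ pairwise vertex-disjoint $k \times k$ subgrids $H_1, \ldots, H_{t^2}$ of $H$, arranged in a $t \times t$ pattern with at least one row and one column of $H$ separating any two neighboring subgrids. For each $i$, let $C_i$ denote the boundary $4(k-1)$-cycle of $H_i$, and let $U_i$ denote the connected component of $M_g \setminus C_i$ containing the vertices of $H_i$ not on $C_i$. By construction, the curves $C_1, \ldots, C_{t^2}$ are pairwise disjoint simple closed curves in the given embedding on $M_g$, and each $\overline{U_i}$ is a compact surface in $M_g$ with boundary $C_i$ that contains $H_i$.

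Next, I would argue that for at least one index $i$ the region $\overline{U_i}$ is a closed topological disc, which is exactly the conclusion of the lemma. Assume for contradiction that no $\overline{U_i}$ is a disc; then each $\overline{U_i}$ has $\chi(\overline{U_i}) \le 0$, since any compact surface with nonempty boundary that is not a disc has nonpositive Euler characteristic. Using additivity of Euler characteristic across the decomposition of $M_g$ cut along the disjoint cycles $\bigcup_i C_i$ (each with $\chi(C_i) = 0$), one obtains
\[
\chi(M_g) \;=\; \sum_{R} \chi(\overline{R}),
\]
where $R$ ranges over the connected components of $M_g \setminus \bigcup_i C_i$. The regions $U_1, \ldots, U_{t^2}$ are among these components and contribute a nonpositive total. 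Bounding the contributions of the remaining ``grid-free'' components, one shows that the total sum cannot reach $2-2g$ once $t^2 \ge 2g+1$, contradicting $\chi(M_g)=2-2g$.

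The main obstacle will be the topological Euler characteristic bookkeeping, which must track the nesting of contractible cycles and the possibility that several $C_i$ bound a common grid-free region. A clean way to handle this is by induction on the number of cycles: at each step, peel off an innermost contractible $C_i$ together with the disc it bounds on its non-grid side; if that disc happens to contain any $H_j$, the lemma is proved with $H' = H_j$ (because then $\overline{U_j}$ is contained in, hence equal to, a disc), and otherwise the disc and the cycle $C_i$ can be discarded without affecting the grids. The reduced configuration consists of pairwise disjoint \emph{essential} cycles, and a standard surface-topology argument (essentially, the dimension of $H_1(M_g;\mathbb{Z}_2)$ is $2g$) limits the number of such cycles whose selected sides can all be non-discs to at most $2g$, which contradicts $t^2 \ge 2g+1$. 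The combinatorial tiling at the start is routine; the substance of the proof lies entirely in this topological count.
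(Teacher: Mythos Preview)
Your tiling of $H$ into $t^2$ vertex-disjoint $k\times k$ subgrids with boundary cycles $C_1,\dots,C_{t^2}$ is a sound starting point, but the topological half has two genuine gaps. The sentence ``the regions $U_1,\dots,U_{t^2}$ are among these components'' is false: each $U_i$ is a component of $M_g\setminus C_i$, whereas the summands $R$ in your identity $\chi(M_g)=\sum_R\chi(\overline R)$ are components of the finer decomposition $M_g\setminus\bigcup_j C_j$. When $C_i$ is non-separating, $U_i$ is all of $M_g\setminus C_i$ and contains every other $C_j$, so it is not one of the $R$'s, and the intended inequality does not follow.

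The fallback via $\dim H_1(M_g;\mathbb{Z}_2)=2g$ is also insufficient as stated: that dimension does not bound the number of pairwise disjoint essential simple closed curves (a torus carries arbitrarily many parallel meridians). What actually yields the bound $2g$ in your situation is the additional feature, which you do not invoke, that all the $C_i$ lie on the boundary of a \emph{single} component of $M_g\setminus\bigcup_i C_i$, namely the one containing the buffer strips of $H$. This is exactly the elementary topological fact isolated in the paper's sketch: among disjoint noncontractible curves bordering a common region, no three are pairwise homotopic, and pairwise non-homotopic ones number at most $g$, giving at most $2g$ in total. With this fact in hand your outline can be completed (a contractible $C_i$ then forces either the interior of $H_i$, or every other $H_j$, into the disc it bounds), but the bare $H_1$-count does not deliver it. For comparison, the paper's sketch applies the same topological lemma to the unit $4$-cycles of $H$ rather than to the subgrid boundaries; the underlying mechanism is identical.
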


The idea of the proof of Lemma~\ref{lemma_grid} is roughly the following. It is enough to show that in every embedding of the grid $H$ on $M_g$, the number of noncontractible $4$-cycles of $H$ is bounded by a function of $g$; in this case linear in $g$. This is a consequence of an elementary topological result stating that a collection $\mathcal{C}$ of disjoint noncontractible closed curves in $M_g$ that appears on the boundary of a common component of $M_g\setminus \bigcup\mathcal{C}$, has cardinality at most $2g$. Indeed, such a collection cannot contain three pairwise homotopic curves, and a collection of pairwise disjoint pairwise nonhomotopic noncontractible closed curves in $M_g$ has cardinality at most $g$, which can be shown by induction on $g$.

%\begin{claim}
%The graph $K$ has no embedding on $M_4$. 
%\end{claim}

%Suppose that $K$ has an embedding $\mathcal{K}$ on $M_4$. 
%
%- - - - - - - - - - - - - - - - - - - - - - - - - - - - - 
\paragraph{The graph $K$ has no embedding on $M_4$.}
Let $\mathcal{K}$ be an embedding of $K$ on an orientable surface $S$ of minimum possible genus; in particular, $\mathcal{K}$ is a $2$-cell embedding.

Let $H_0$ be a subgraph of $H$ induced by the the vertices $(\mathbb{N}\cap[3N/8,5N/8])\times (\mathbb{N}\cap[3N/8,5N/8])$. The graph $H_0$ is a square grid, sufficiently far from the special $4$-cycles and from the boundary of $H$ if $N$ is large enough. We use Lemma~\ref{lemma_grid} for the induced embedding of $H_0$ with $k\ge 5$. Let $H'$ be the resulting square grid and let $D' \subset S$ be the smallest topological closed disc containing the image of $H'$ in $\mathcal{K}$. We chose $k\ge 5$ so that $H'$ has at least $16$ vertices on its perimeter, which is sufficient for our next construction, although in Figure~\ref{obr_4__K_4_5} we draw a slightly larger grid.

%For simplicity, we use Lemma~\ref{lemma_grid} with a sufficiently large $N$ and do not try to optimize its value. Let $H'$ be the grid obtained from the lemma 

\begin{figure}
\begin{center}
\epsfbox{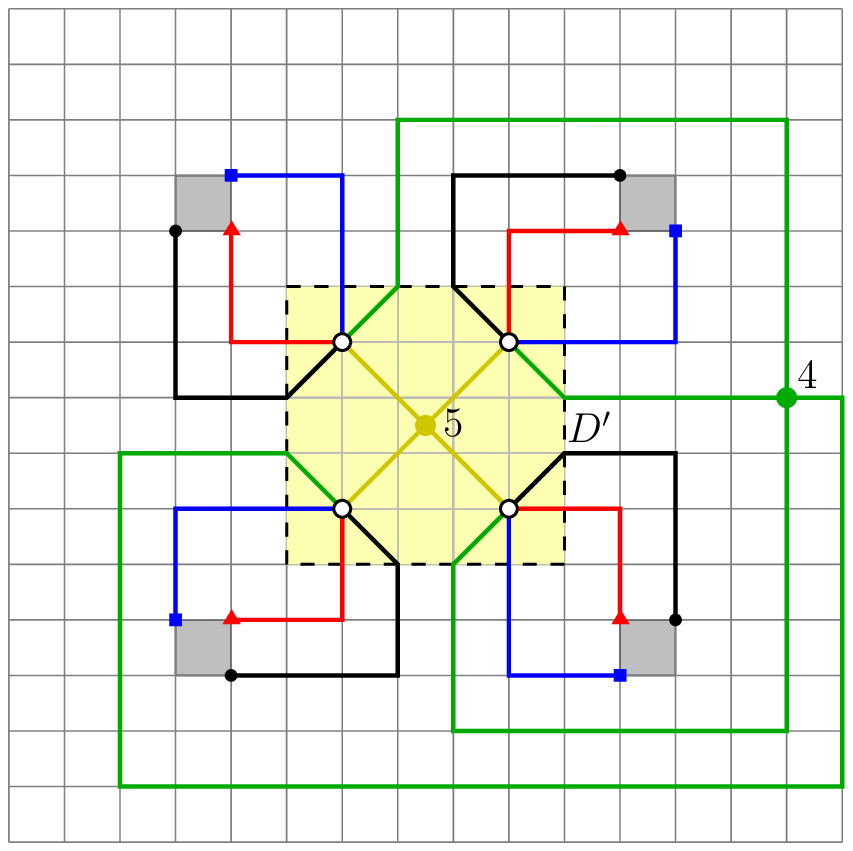}
\end{center}
\caption{A part of an orientable embedding of $K_{4,5}$ constructed from $\mathcal{K}$, using vertex-disjoint paths in $H$ and a disc $D'$, whose boundary is dashed in the figure. The $4$-cycles of $H$ within $D'$ are guaranteed to be filled with topological discs. Each of the four vertices of $W'$, represented by circles, has the rotation $(1,2,3,4,5)$.
}
\label{obr_4__K_4_5}
\end{figure}

Using the embedding $\mathcal{K}$ and the disc $D'$ we construct an embedding $\mathcal{E}'$ of $G'=K_{4,5}$ on $S$; see Figure~\ref{obr_4__K_4_5}. Let $V(G')=U'\cup W'$ where $U'$ and $W'$ are independent sets of size $5$ and $4$, respectively, and $U'=\{1,2,3,4,5\}$. We will refer to the vertices $1,2,3,4$ and $5$ together with their incident edges as black, red, blue, green and yellow, respectively. We identify the vertices $1,2,3$ of $U'$ with the vertices $1,2,3$, respectively, of $\mathcal{K}$. We place the green vertex $4$ in $H$ so that it is sufficiently far from $H'$, from the four special $4$-cycles, and from the boundary of $H$. We place the yellow vertex $5$ and all the four vertices of $W'$ inside $D'$; not necessarily coinciding with any vertices of~$\mathcal{K}$. 

We draw the black, red and blue edges of $G'$ along the edges of $\mathcal{K}$ incident with vertices $1,2$ and $3$, respectively, so that they reach the special $4$-cycles in $H$. Then we extend these twelve edges using vertex-disjoint paths in $H$, until they reach the boundary of $D'$. We draw the last portions of these edges inside $D'$, without having to use the embedding $\mathcal{K}$. Due to the planarity of $H$, the cyclic orders of the colored vertices on the boundaries of the special $4$-cycles are ``linked'' by the black, red and blue edges to opposite cyclic orders around the vertices of $W'$ inside $D'$; in particular, the rotation of each vertex of $W'$ in the constructed drawing contains the cyclic subsequence $(1,2,3)$. Moreover, we can make sure that the black and blue edges, incident to the vertices $1$ and $3$, respectively, are ``accessible'' from the boundary of $H$, while the red edges are ``hidden''. We proceed similarly with the green edges, which are drawn from the green vertex $4$ along vertex-disjoint paths of $H$ (and disjoint from the black, red and blue edges drawn previously) until they reach the boundary of $D'$, and continue inside $D'$. Such vertex-disjoint paths supporting the black, red, blue and green edges exist since the degree of the vertex $4$ in $H$ is $4$ and the disc $D'$, the special $4$-cycles, the boundary of $H$, and the vertex $4$ are all sufficiently far from each other in $H$. The yellow edges are drawn completely inside $D'$. The crucial property that we satisfy is that the rotations of the four vertices in $W'$ are all equal to $(1,2,3,4,5)$. This implies the following observation, analogous to Observation~\ref{obs_aspon6cykly} in the previous subsection.

\begin{observation}\label{obs_10}
No face of $\mathcal{E}'$ is bounded by a walk of length shorter than $10$. In fact, the length of each facial walk in $\mathcal{E}'$ is divisible by $10$.
\end{observation}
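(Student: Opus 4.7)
The plan is to imitate the argument used for Observation~\ref{obs_aspon6cykly}, but now exploit the fact that the common rotation at each vertex of $W'$ is the length-$5$ cycle $(1,2,3,4,5)$. Since $G'=K_{4,5}$ is bipartite with parts $U'$ and $W'$, every closed walk in $G'$ has even length and alternates between the two parts. In particular, any facial walk of $\mathcal{E}'$ has the form
\[
w_0\, u_1\, w_1\, u_2\, w_2\, \dots\, u_k\, w_k = w_0
\]
with $w_i \in W'$ and $u_i \in U'$, and total length $2k$. I would identify $U' = \{1,2,3,4,5\}$ with $\mathbb{Z}/5\mathbb{Z}$.

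Next I would apply the local defining rule for facial walks: whenever the walk enters a vertex $w_i \in W'$ along the edge $u_i w_i$, the next edge is the one immediately following $u_i w_i$ in the clockwise rotation of $w_i$. Because the rotation at every vertex of $W'$ is $(1,2,3,4,5)$, this next edge is $w_i u_{i+1}$ with $u_{i+1} \equiv u_i + 1 \pmod 5$. Summing this local increment over the $k$ visits of $W'$ along the closed walk, the total shift must vanish modulo $5$, giving $k \equiv 0 \pmod 5$. Hence the facial walk length $2k$ is divisible by $10$, and since $G'$ is simple the shortest possible facial walk has length exactly $10$.

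The only subtlety I anticipate is that facial walks in a $2$-cell embedding can revisit vertices and edges, so one must be careful that the local increment rule applies at every occurrence of a vertex of $W'$. This is however immediate, because the successor of a directed edge on a facial walk depends solely on the rotation at its head and on the incoming edge, not on the history of the walk. Thus no genuine obstacle arises, and the argument mirrors Observation~\ref{obs_aspon6cykly} with $3$ replaced by $5$.
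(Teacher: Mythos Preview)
Your proposal is correct and follows essentially the same approach as the paper: both arguments use that the common rotation $(1,2,3,4,5)$ at every vertex of $W'$ forces the $U'$-label to increment by $1$ modulo $5$ at each visit to $W'$, so a closed facial walk of length $2k$ must satisfy $k\equiv 0\pmod 5$. Your write-up is simply more explicit than the paper's one-sentence proof, and your remark about revisited vertices is a valid (and correctly resolved) point that the paper leaves implicit.
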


\begin{proof}
Since $\mathcal{E}'$ is an embedding on an orientable surface, whenever we trace a facial walk in the counterclockwise direction, an edge from $i\in U'$ to $w\in W'$ must be followed by the edge from $w$ to $i+1$ (taken modulo $5$). 
\end{proof}

Let $v,e$ and $f$ be the numbers of vertices, edges and faces, respectively, of $\mathcal{E}'$. By Observation~\ref{obs_10}, we have $2e\ge 10f$, and so we can bound the Euler characteristic of $S$ as follows: 
\[
\chi(S)=v-e+f=\frac{1}{5}(5v-5e+5f)\le\frac{1}{5}(5v-4e)=\frac{1}{5}(45-80)=-7.
\]
This implies that the genus of $S$ is at least $\lceil(2+7)/2\rceil = 5$. Therefore, $K$ has no embedding on $M_4$. 

%- - - - - - - - - - - - - - - - - - - - - - - - - - - - - 
\paragraph{The graph $K$ has an embedding on $M_5$.}
%Finally, we observe that $K$ has an embedding on $M_5$. 
First we describe an embedding of $G$ on $M_2$ where the rotations of the four vertices $A,B,C,D$ in $W$ are equal. We can embed $G$ so that the rotation of each vertex in $W$ is $(1,2,3)$, the rotation of $1$ is $(C,A,B,D)$, and the rotations of both $2$ and $3$ are $(A,B,C,D)$. This embedding has three faces, with facial walks of lengths $6,6$ and $12$. By Euler's formula, this rotation system indeed gives an embedding on $M_2$. An embedding of $K$ on $M_5$ is obtained by an analogous gluing operation as the drawing in Claim~\ref{claim_indep_even_M4}.

%===========================================================================================
\section{Consequences}
\label{section_consequences}
Using the additivity of the genus~\cite{BHKY62_additivity} and the $\mathbb{Z}_2$-genus~\cite{SS13_block} of a graph over its components, by taking the disjoint union of the graph $K$ from Theorem~\ref{theorem_counter_genus4} with $k$ copies of $K_5$ we obtain a counterexample to an extension of the strong Hanani--Tutte theorem on an arbitrary orientable surface of genus larger than $4$. Moreover, by taking $k$ disjoint copies of $K$, we obtain a separation of the genus and the $\mathbb{Z}_2$-genus by a multiplicative factor of $5/4$.

\begin{corollary}
\label{cor_separation_5_4}
For every positive integer $k$ there is a graph of genus $5k$ and $\mathbb{Z}_2$-genus at most $4k$.
\end{corollary}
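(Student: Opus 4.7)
The plan is to combine Theorem~\ref{theorem_counter_genus4} with the additivity of the two graph invariants over connected components, as indicated in the opening sentence of the section. Concretely, let $K$ be the graph constructed in the proof of Theorem~\ref{theorem_counter_genus4}: it has genus exactly $5$ (it has no embedding on $M_4$, but does have one on $M_5$) and admits an independently even drawing on $M_4$, so its $\mathbb{Z}_2$-genus is at most $4$. For a fixed $k$, I would take $G_k$ to be the disjoint union of $k$ copies of $K$.

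The first step is to compute $g(G_k)$. By the Battle--Harary--Kodama--Youngs additivity theorem~\cite{BHKY62_additivity}, the genus of a graph equals the sum of the genera of its (connected or, more strongly, its blocks) components, so $g(G_k)=k\cdot g(K)=5k$. The second step is to bound the $\mathbb{Z}_2$-genus. By the analogous additivity result of Schaefer and {\v{S}}tefankovi{\v{c}}~\cite{SS13_block} for the $\mathbb{Z}_2$-genus over connected components, the $\mathbb{Z}_2$-genus of $G_k$ is at most $k$ times the $\mathbb{Z}_2$-genus of $K$, hence at most $4k$. Combining the two estimates yields the claimed separation.

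There is no real obstacle here: once Theorem~\ref{theorem_counter_genus4} is in hand and both additivity theorems are invoked, the corollary is immediate. The only mildly delicate point is that the cited additivity results are stated for connected components, so one should note that $G_k$ has exactly $k$ connected components, each isomorphic to the connected graph $K$, so that both additivity statements apply directly without further argument.
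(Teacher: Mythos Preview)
Your proposal is correct and follows exactly the paper's own argument: take $k$ disjoint copies of the graph $K$ from Theorem~\ref{theorem_counter_genus4} and invoke additivity of the genus~\cite{BHKY62_additivity} and of the $\mathbb{Z}_2$-genus~\cite{SS13_block} over components to obtain genus $5k$ and $\mathbb{Z}_2$-genus at most $4k$. There is nothing to add.
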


Schaefer and {\v{S}}tefankovi{\v{c}}~\cite{SS13_block} asked whether the genus of a graph can be bounded by a function of its $\mathbb{Z}_2$-genus. In our follow-up paper~\cite{FK18_Z2genus}, we show that this follows from a folklore unpublished Ramsey-type result about unavoidable graph minors of large genus.

%===========================================================================================
\section{Related questions and open problems}
\label{section_questions}

Schaefer~\cite{Sch13_hananitutte} introduced the following weaker variant of the strong Hanani--Tutte theorem, parametrized by a positive integer $t$.

\begin{conjecture}[{\cite[Conjecture 3.3]{Sch13_hananitutte}}]
\label{conjecture_parametrized}
If a graph $G$ has an independently even drawing on a surface $S$ such that every pair of edges crosses at most $t$ times, then $G$ has an embedding on $S$.
\end{conjecture}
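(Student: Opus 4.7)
The plan is an inductive approach on the crossing-multiplicity bound $t$, combined with parity-preserving local redrawings in the style of Pelsmajer, Schaefer and \v{S}tefankovi\v{c}, with the weak Hanani--Tutte theorem on surfaces as the ultimate target tool.

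For the base case $t=1$, since the drawing $\mathcal{D}$ is independently even, every pair of independent edges must cross $0$ times, so the only crossings are between pairs of adjacent edges, each crossing exactly once. For each such crossing $x$ of two edges $uv, uw$ sharing a vertex $u$, I would perform a local swap exchanging the initial portions of $uv$ and $uw$ between $u$ and $x$. This removes the chosen crossing, and the standard parity analysis shows that such a swap affects the parity of crossings only among pairs incident to $u$; combining this with a careful bookkeeping at each common endpoint, one can preserve the independently even property while iterating. The goal is to reach an even drawing (every pair crosses evenly) and then invoke Theorem~\ref{theorem_weaksurface} to recover an embedding of $G$ on $S$.

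For the inductive step $t \geq 2$, the strategy is to reduce the maximum number of crossings per pair. A pair of edges crossing $t$ times with $t$ even admits a local rerouting of one edge along a segment of the other, shaving off two crossings of that pair; for edges that are found to be even, one can apply the PSS edge-redrawing lemma to draw them crossing-free while preserving crossing parities of all other pairs. I would combine these moves with a lexicographic minimality argument on the vector $(c_t, c_{t-1}, \ldots, c_1)$, where $c_i$ counts the number of pairs crossing exactly $i$ times, and aim to show that a minimal independently even $t$-bounded drawing on $S$ must either already be an embedding or admit a structural reduction.

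The main obstacle, and the reason the conjecture has resisted proof, is controlling the cross-effects of these redrawings: eliminating two crossings of one pair may introduce many new crossings with third edges, possibly pushing their multiplicities past the bound $t$ and breaking the induction hypothesis. This is where the bounded-multiplicity assumption must be leveraged beyond pointwise bookkeeping; one would hope that a global topological constraint, encoded for instance via a careful choice of the redrawing curve that hugs the original edge closely, forces the new crossings to occur only with edges that already cross below the threshold. A plausible auxiliary step would be to combine the conjecture with the follow-up result of~\cite{FK18_Z2genus} that the ordinary genus is bounded by a function of the $\mathbb{Z}_2$-genus: for each fixed $t$ and $S$, this would localize the problem to a finite family of candidate forbidden minors for embeddability in $S$, where a direct combinatorial analysis, much in the spirit of the proofs of Theorems~\ref{theorem_counter_unified_torus} and~\ref{theorem_counter_genus4}, might be feasible.
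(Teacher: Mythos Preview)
The statement you are trying to prove is a \emph{conjecture}, and the paper does not prove it; on the contrary, the paper \emph{disproves} it for $t=2$ on every orientable surface of genus at least~$4$. The graph $K$ constructed in Claim~\ref{claim_indep_even_M4} has an independently even drawing on $M_4$ in which every pair of edges crosses at most twice (only the pairs $\{C1,B3\}$ and $\{D2,B3\}$ from the original drawing $\mathcal{D}$ of $K_{3,4}$ cross twice, and all other crossing pairs cross once), yet $K$ has no embedding on $M_4$. Hence no inductive scheme on $t$ can succeed past the base case: the inductive step from $t=1$ to $t=2$ is simply false.

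Your base case $t=1$ is not settled either: it is exactly Problem~\ref{problem_adjacent} in the paper, which is explicitly listed as open. Your proposed local swap at a crossing of two adjacent edges $uv$, $uw$ does remove that crossing, but it can introduce new crossings of $uv$ and $uw$ with third edges, and nothing in your bookkeeping prevents these new crossings from being between independent edges and of odd parity, or from pushing some pair above the bound $t=1$. This is the very obstacle you flag in the inductive step, and it already bites at $t=1$. The appeal to Theorem~\ref{theorem_weaksurface} at the end would be legitimate once an even drawing is in hand, but you have no mechanism to reach one. In short, the proposal attempts to prove a false statement, and even its base case is an open problem.
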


The drawing on $M_4$ constructed in Claim~\ref{claim_indep_even_M4} disproves Conjecture~\ref{conjecture_parametrized} for $t=2$ and orientable surfaces of genus at least $4$. The conjecture remains open for $t=1$. In this case, only adjacent edges of $G$ are allowed to cross in the initial drawing. In fact, the following question, often expressed as ``do adjacent crossings mater?'', is open.

\begin{problem}
\label{problem_adjacent}
Let $S$ be a surface other than the plane or the projective plane. Assume that a graph $G$ has a drawing on $S$ where only adjacent edges are allowed to cross. Does $G$ have an embedding on $S$?
\end{problem}

Problem~\ref{problem_adjacent} can also be formulated in terms of the \emph{independent crossing number} of $G$ on $S$, which may be denoted by $\mathrm{cr}_{-S}(G)$ using the notation in Schaefer's survey on crossing numbers~\cite{Sch17_survey}. For a given surface $S$, Problem~\ref{problem_adjacent} then asks whether $\mathrm{cr}_{-S}(G)=0$ implies $\mathrm{cr}_S(G)=0$.

The strong Hanani--Tutte theorem and its possible generalizations can be weakened in several other ways. For example, instead of an independently even drawing of $G$ we may consider the edges of $G$ oriented and require a drawing of $G$ where for every pair of independent edges $e$ and $f$, the number of crossings in which $e$ crosses $f$ from the left is equal to the number of crossings in which $e$ crosses $f$ from the right. This can be formulated in terms of the \emph{independent algebraic crossing number} of $G$ on $S$, denoted by $\mathrm{iacr}_S(G)$~\cite{Sch17_survey}, which is well-defined on orientable surfaces.

\begin{problem}
\label{problem_algebraic}
Let $g\ge 1$. Does $\mathrm{iacr}_{M_g}(G)=0$ imply $\mathrm{cr}_{M_g}(G)=0$?
\end{problem}

For the graph $K$ from the proof of Theorem~\ref{theorem_counter_genus4} we can only show that $\mathrm{iacr}_{M_4}(K)\le 2$.

In the drawing of $K$ from Claim~\ref{claim_indep_even_M4}, there are three pairs of adjacent edges such that for each of the pairs, the union of the two edges contains a noncontractible curve. Can this be avoided in a counterexample? This question was suggested to us by Jeff Erickson.

\begin{problem}
Let $S$ be a surface other than the plane or the projective plane. Assume that a graph $G$ has an independently even drawing on $S$ where the union of every pair of adjacent edges can be covered by a topological disc. Does $G$ have an embedding on $S$?
\end{problem}

%============================================================================================

%%%%%%%%%%%%%%%%%%%%%%%%%%%%%%%%%%%%%%%%%%%%%%%%%%%%%%%%%%%%%%%%%%%%%%%%%%%%%%%%%%%%%%%%%

\end{document}